\documentclass[a4paper,11pt]{amsart}

\usepackage{amsmath,amsfonts,amssymb,graphics,amsthm,mathrsfs,pgfplots}
\usepackage{hyperref}
\usepackage{enumitem}
\usepackage{bbm}
\usepackage{mathtools}
\usepackage{a4wide}
\usepackage{dsfont}
\usepackage[foot]{amsaddr}
\usepackage{geometry}
\newtheorem{theorem}{Theorem}[section]
\newtheorem{lemma}[theorem]{Lemma}
\newtheorem{proposition}[theorem]{Proposition}

\newtheorem{definition}[theorem]{Definition}
\theoremstyle{remark}
\newtheorem{remark}[theorem]{Remark}
\newtheorem{remarks}[theorem]{Remarks}

\newtheorem{question}[theorem]{Question}

\newcommand{\id}{\mathds{1}}
\newcommand{\R}{\mathbb{R}}
\renewcommand{\P}{\mathbb{P}}
\newcommand{\E}{\mathbb{E}}
\newcommand{\Z}{\mathbb{Z}}

\newcommand{\eps}{\varepsilon}

\begin{document}

\title[The maximum of a strongly correlated Gaussian process]{The maximum of a strongly \\ correlated Gaussian process} 
\author{Jason Li}
\author{Stephen Muirhead}
\email{ylii0409@student.monash.edu,stephen.muirhead@monash.edu}
\address{School of Mathematics, Monash University}

\date{} 
\date{\today}

\begin{abstract}
We revisit a result of Mittal--Ylvisaker that states that the rescaled maximum of a stationary sequence of Gaussian random variables has a Gaussian limit if correlations decay sufficiently slowly. Taking a new approach we relax the conditions for the Gaussian limit and give an extension to smooth non-stationary random fields.
\end{abstract}

\maketitle


\section{Introduction}

A foundational result in extreme value theory is that the maximum of an i.i.d.\ collection of Gaussian random variables has an asymptotically Gumbel law after suitable rescaling \cite{ft28,gne43}. Berman \cite{ber64} showed that this remains true for stationary sequences of dependent Gaussian random variables $(X_n)_{n \in \Z}$ if the covariance $K(n) = \textrm{Cov}[X_0, X_n]$ satisfies $K(n) \log n \to 0$ as $n \to \infty$. If correlations decay more slowly, Mittal and Ylvisaker showed that a Gaussian limit can arise:

\begin{theorem}[\cite{my75, my76}]
\label{t:my}
Let $(X_n)_{n \in \Z}$ be a stationary Gaussian sequence with covariance $K(n) = \textrm{Cov}[X_0 ,X_n]$. Suppose that, as $n \to \infty$, $K(n) \to 0$ monotonically and $K(n) \log n \to \infty$ eventually monotonically. Then there exist $a_n, b_n$ such that
\[  b_n (M_n - a_n)   \stackrel{d}{\Longrightarrow} Z    \quad \text{as } n \to \infty , \]
where $M_n = \max_{0 \le i \le n} X_n$ and $Z$ is a standard Gaussian random variable.
\end{theorem}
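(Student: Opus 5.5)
The plan is to decompose the sequence into a common Gaussian factor plus an approximately Berman-type residual field. Set $r_n := K(n)$. Normalise $K(0)=1$.

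\textbf{Step 1: the decomposition.} The goal is to write, for each fixed $n$ and $0 \le i \le n$,
\[
X_i = \sqrt{r_n}\, W + Y_i ,
\]
where $W$ is a standard Gaussian independent of $(Y_i)$. The residual $(Y_i)$ is a centred Gaussian vector with covariance $\mathrm{Cov}[Y_i,Y_j] = K(i-j) - r_n$. This covariance is nonnegative definite for $|i-j| \le n$. To see this, use that monotonicity and convexity-type structure make $K(\cdot)-r_n$ dominated by a positive definite function. Alternatively, and more robustly, use the Slepian/normal comparison lemma to compare $(X_i)$ directly with the process $\sqrt{r_n}W + \sqrt{1-r_n}\,\tilde Y_i$, where $\tilde Y$ has correlations $(K(i-j)-r_n)/(1-r_n) \ge 0$.

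\textbf{Step 2: the residual maximum concentrates.} Consider $M_n^Y = \max_{i \le n} Y_i$. I would show that
\[
\frac{M_n^Y - \E M_n^Y}{\sqrt{r_n}} \to 0 \quad \text{in probability}.
\]
The Borell--TIS concentration inequality gives fluctuations of $M_n^Y$ of order at most $1$, which is not enough on its own. The sharper input is that $Y$ has small correlations on most pairs. Chatterjee's superconcentration bound, or a direct Berman comparison on blocks, then gives
\[
\mathrm{Var}(M_n^Y) \lesssim \frac{1}{\log n} + \text{(correlation terms)} .
\]
Since $r_n \log n \to \infty$, this variance is $o(r_n)$.

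Concretely, I would split $[0,n]$ into $n/m$ blocks of length $m = n^{\delta}$. Block maxima are nearly independent, because the residual correlations $K(k)-r_n$ are small once $k \ge m$. Monotonicity of $K$ together with $K(n)\log n$ eventually monotone implies $K(m) - K(n)$ is small compared to $1/\log n$ over the relevant range. The maximum of roughly independent block maxima then has fluctuations $O(1/\sqrt{\log n})$.

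\textbf{Step 3: combine.} We have $M_n = \sqrt{r_n}\,W + M_n^Y$, because the common term shifts all coordinates equally. Setting $a_n = \E M_n^Y$ and $b_n = r_n^{-1/2}$ gives
\[
b_n(M_n - a_n) = W + o_P(1) \Rightarrow Z .
\]

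\textbf{Main obstacle.} The main obstacle is Step 2 under only the stated monotonicity hypotheses. The residual correlations $K(k)-r_n$ can be of order $1/\log n$ or larger for $k$ up to a power of $n$. This regime is exactly the Berman borderline. I would handle it by choosing $m$ with $K(m)-K(n) = o(r_n)$ while $\log m \asymp \log n$, which is possible because $K(n)\log n\to\infty$ monotonically. The slowly varying-type control this hypothesis provides is what I expect to need careful justification. I would then apply Chatterjee's variance bound
\[
\mathrm{Var}\,M \le C\Bigl(\max_{|i-j|\ge m}\mathrm{Cov} + \frac{1}{\log(n/m)}\Bigr)
\]
to $Y$. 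This yields $o(r_n)$, since $r_n\log n \to \infty$.
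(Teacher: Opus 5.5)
The main gap is in Step~1.

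\textbf{Step 1 is unjustified.} A coupling $X_i=\sqrt{r_n}\,W+Y_i$, $0\le i\le n$, with $W$ independent of $Y$ exists if and only if the matrix $(K(i-j)-K(n))_{0\le i,j\le n}$ is positive semidefinite. Equivalently, every affine combination $\sum_i a_iX_i$ with $\sum_i a_i=1$ must have variance at least $K(n)$.

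You give no valid argument for this, and it does not follow from monotonicity. The $3\times 3$ Toeplitz matrix built from $(K(0),K(1),K(2))=(1,0.9,0.625)$ is positive definite and decreasing along diagonals. Yet $a=(1,-\sqrt2,1)$ gives $\sum_{i,j}a_ia_j\bigl(K(i-j)-K(2)\bigr)<0$. In the classical argument, this is exactly what a convexity assumption on $K$ provides: P\'olya's criterion makes $(K(\cdot)-K(n))^+$ positive definite. Convexity is not among the hypotheses here, and ``convexity-type structure'' is not available.

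Your ``more robust'' alternative is circular. The process $\sqrt{r_n}W+\sqrt{1-r_n}\,\tilde Y$ has covariance exactly $K(i-j)$, so a Slepian comparison with it says nothing. Moreover, $\tilde Y$ exists only if the same matrix is positive semidefinite. So as written, the Gaussian component is never actually constructed.

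\textbf{Step 2 also needs correcting, though this is easy.} In general $K(m)-K(n)$ is not small compared to $1/\log n$; take $K(n)=(\log n)^{-1/2}$ and $m=n^{\delta}$. So the residual maximum does not fluctuate on scale $(\log n)^{-1/2}$, and ``$\log m\asymp\log n$'' is not the relevant condition. What works is the following. For fixed $\eta$, take $m=n^{1-\beta}$ with $\beta=\eta/(1+\eta)$. Monotonicity of $K(n)\log n$ then gives $K(m)\le(1+\eta)K(n)$, while $r_n\log(n/m)\to\infty$. A universal-constant bound of the type you quote, proved exactly as Proposition~\ref{p:ub} via Lemma~\ref{l:hyp} and Lemma~\ref{l:deloc}, then gives $\limsup_n\mathrm{Var}(M^Y_n)/r_n\le C\eta$. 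Finally let $\eta\to0$.

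\textbf{How to repair Step 1.} The same estimates fix Step~1 if you drop the exact common factor and regress on the sample mean instead. The decomposition $X_i=c_i\bar X_n+Y_i$, with $Y$ independent of $\bar X_n$, always exists. Monotonicity together with $K(n^{1-\beta})\le(1+\eta)K(n)$ gives $|c_i-1|=O(\eta)$, and $\mathrm{Cov}(Y_i,Y_j)=O(\eta K(n))$ for $|i-j|\ge n^{1-\beta}$. Hence $|M_n-\bar X_n-M^Y_n|\le O(\eta)\,|\bar X_n|$, and your Step~3 goes through. This is close in spirit to the Ho--McCormick recentring by the partial average.

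\textbf{Comparison with the paper.} The paper avoids any decomposition. It deduces Theorem~\ref{t:my} from Theorem~\ref{t:main} via Remark~(a) following it, where monotonicity yields asymptotic flatness. The Gaussian part is taken to be the best linear approximation $Q_1(M)=\sum_i\P[I=i]X_i$ of Proposition~\ref{p:bla}. The price is that both variance bounds, Propositions~\ref{p:lb} and~\ref{p:ub}, must be sharp up to a factor $1+o(1)$. Your repaired route needs only crude $O(\cdot)$ bounds on the residual.
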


Mittal-Ylvisaker also gave two extensions to this result: they studied the boundary regime $K(n) \log n \to \mu \in (0,\infty)$, in which the maximum has a mixed Gumbel-Gaussian limit, and proved analogous results for certain continuous non-smooth stationary Gaussian processes.

\vspace{0.1cm}
The monotonicity conditions in Theorem \ref{t:my} are fairly restrictive, for instance they imply that correlations are non-negative. However some regularity is necessary since, as also shown in \cite{my75}, a Gaussian limit does not arise in general under the sole assumption that $K(n) \log n \to \infty$. 

\vspace{0.1cm}
In this note we revisit the limit theorem of Mittal-Ylvisaker, making several new contributions:
\begin{itemize}
\item We relax the monotonicity assumptions, replacing them with an `asymptotic flatness' condition which can be viewed as a slight strengthening of the usual notion of `slow variation'.
 \item We extend the result in three directions: to smooth processes, to random fields in higher dimensions, and to certain non-stationary processes.
\item We provide a completely different proof, which exploits a link between the Gaussian limit and the closeness of the maximum to its `best linear approximation'.
\end{itemize}

\subsection{Main result}

Let $f$ be a continuous Gaussian process on either $\R^d$ or $\Z^d$, and let $K(x,y) = \textrm{Cov}[f(x) ,f(y)]$ be its covariance kernel. Throughout we assume that $f$ is centred.

\smallskip
We introduce the following notion of regularity:

\begin{definition}[Asymptotic flatness]
\label{d:af}
We say that $K$ is asymptotically flat if there exists a function $w : \R^+ \to \R$ such that $K(x,y) \sim w(|x-y|)$ as $|x-y| \to \infty$, and which satisfies the following:  for every $\eta > 0$ there exists a $\beta > 0$ such that
\begin{equation}
\label{e:af}
  \limsup_{v \to \infty}  \sup_{u \in [v^{-\beta} , 1] } \Big| \frac{ w( u v) }{ w(v) } - 1 \Big| \le \eta  .
\end{equation}
\end{definition}

We now state our main result:
\begin{theorem}
\label{t:main}
Suppose that $\textrm{Var}[f(x)] = 1$ for every $x$, $K(x,y) \log |x-y| \to \infty$ as $|x-y| \to \infty$, and $K$ is asymptotically flat in the sense of Definition \ref{d:af}. Then there exist $a_R, b_R$ such that
\[  b_R (M_R - a_R)   \stackrel{d}{\Longrightarrow} Z    \quad \text{as } R \to \infty ,\]
where $M_R = \sup_{x \in [0,R]^d} f(x)$ and $Z$ is a standard Gaussian random variable.
\end{theorem}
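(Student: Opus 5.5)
The plan is to compare $M_R$ with its best linear approximation, namely the spatial average
\[ \bar f_R = \frac{1}{|[0,R]^d|}\int_{[0,R]^d} f(x)\,dx \]
(replaced by a sum over lattice points in the $\Z^d$ case). $\bar f_R$ is exactly Gaussian with variance $\sigma_R^2 = |[0,R]^d|^{-2}\iint K(x,y)\,dx\,dy$. By asymptotic flatness applied with $u$ ranging over $[R^{-\beta},1]$, we have $\sigma_R^2 \sim w(R)$; the pairs at distance less than $R^{1-\beta}$ contribute only a negligible proportion. Set $a_R = \E[M_R]$ and $b_R = \sigma_R^{-1}$. The theorem then follows once we show that $M_R - \bar f_R - \E[M_R] = o_{\P}(\sigma_R)$, i.e.\ that $\textrm{Var}[M_R - \bar f_R] = o(w(R))$.

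To obtain this, I would decompose $f = \bar f_R + g$ on $[0,R]^d$, where $g(x) = f(x) - \bar f_R$. Then $M_R = \bar f_R + \sup_{[0,R]^d} g$, so it suffices to show that $\textrm{Var}[\sup g] = o(w(R))$. The covariance of $g$ is
\[ K(x,y) - \bar K(x) - \bar K(y) + \sigma_R^2, \]
where $\bar K(x) = \textrm{Cov}[f(x),\bar f_R]$. By flatness, $\bar K(x) = w(R)(1+O(\eta))$ uniformly in $x$, and for $|x-y| \ge R^{1-\beta}$ we also have $K(x,y) = w(R)(1+O(\eta))$. Hence $g$ is a field whose correlations at distance $\ge R^{1-\beta}$ are $O(\eta\, w(R))$. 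This is far from a small variance of $\sup g$, however, since a sup of nearly independent fields concentrates well.

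For the key variance bound I would use Gaussian concentration in its sharper form, namely the superconcentration/Chatterjee-type bound
\[ \textrm{Var}[\sup g] \le C\sum \P(\text{argmax in } A_i)\,\P(\text{argmax in } A_j)\,|\textrm{Cov}| \]
over a partition of $[0,R]^d$ into boxes $A_i$ of side $R^{1-\beta}$. An alternative is the covariance formula $\textrm{Var}[F] = \int_0^1 \E[\langle \nabla F, \Sigma \nabla F_t\rangle]\,dt$ applied to $F = \sup g$, where $\nabla F$ is the Dirac mass at the argmax. This gives $\textrm{Var}[\sup g] \le \E[\textrm{Cov}_g(x^*, x^*_t)]$, where $x^*$ and $x^*_t$ are the argmaxes of two interpolated copies of $g$. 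The covariance is $O(\eta\, w(R))$ unless $|x^*-x^*_t| < R^{1-\beta}$. The probability of that near event is controlled by a union over the $R^{d\beta}$ boxes: the argmax is spread out, no box being favoured by more than a factor, by stationarity-like uniformity of the variance. This gives at most $R^{-d\beta}$ times $\max|\textrm{Cov}| \le 2$. So
\[ \textrm{Var}[\sup g] \le C\eta\, w(R) + C R^{-d\beta}. \]
Since $w(R)\log R \to \infty$, we have $w(R) \gg 1/\log R \gg R^{-d\beta}$. Letting $\eta \to 0$ gives $o(w(R))$.

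The main obstacle is justifying the spread-out bound for the argmax in the non-stationary setting. We need $\P(x^* \in A_i, x^*_t \in A_j\ \text{nearby})$ to be small, even though the two copies are correlated when $t$ is close to $1$. For $t$ near $1$ the copies coincide, so I would first use the bound only for $t \le 1-\delta$ and control the remaining region differently. The alternative is to work with the hypercontractive version, where the ratio $\P(\text{near})/\log(1/\P(\text{near}))$ appears (Talagrand's $L^1$–$L^2$ inequality). The factor $\log R$ from that inequality is exactly what competes with $w(R)\log R \to \infty$. For the maximal box probabilities, I expect to need a uniform-in-box comparison of $\sup_{A_i} g$ using unit variance and Borell–TIS.
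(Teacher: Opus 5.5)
Using the spatial average $\bar f_R$ instead of the exact projection $Q_1(M_R)=\sum_x \P[I=x]f(x)$ is a legitimate variation. It is not the \emph{best} linear approximation, but you never need that. It also buys something: since $\textrm{Cov}[f(x),\bar f_R]=w(R)(1+O(\eta))$ uniformly in $x$, the covariance of $g$ is $O(\eta\, w(R))$ at distances $\ge R^{1-\beta}$ without any delocalisation input. This replaces the paper's lower bound on $\textrm{Var}[Q_1(M_R)]$ and its explicit formula for $Q_1$.

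The gap is in the step that carries the weight: the near-diagonal term $\int_0^1 \P[|x^*-x^*_t|<R^{1-\beta}]\,dt$. Your bound $CR^{-d\beta}$ for it is false, not just unjustified. The integrand tends to $1$ as $t\to 1$. Cutting at $t=1-\delta$ leaves a contribution up to $\delta$, so you would need $\delta=o(w(R))$ together with a bound on $[0,1-\delta]$ that you do not have. Your displayed inequality also proves too much. Apart from this step you only use $w(R)\gg R^{-d\beta}$, so the argument would apply verbatim to a stationary sequence with $K(n)\sim(\log n)^{-2}$. That sequence is asymptotically flat but satisfies Berman's condition $K(n)\log n\to 0$, so $M_n$ has a Gumbel limit at scale $(\log n)^{-1/2}$. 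Your conclusion would instead force fluctuations of order $\sqrt{w(n)}=(\log n)^{-1}$. The near-diagonal term is genuinely of order $1/\log R$, and this is exactly where the hypothesis $w(R)\log R\to\infty$ must enter.

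What is needed is a $t$-dependent decorrelation bound, which is the heart of the paper's proof; you only gesture at it in your last paragraph.
\begin{itemize}
\item H\"older plus hypercontractivity of the Ornstein--Uhlenbeck semigroup give $\P[I\in S, I^t\in S]\le \P[I\in S]^{1+(1-t)/2}$.
\item Cover the near-diagonal pairs by enlarged boxes $\widetilde{\mathcal C}_i$, which overlap at most $3^d$ times. Integrating $p^{(1-t)/2}$ over $t$ then bounds the term by $2\cdot 3^d/|\log p|$, where $p=\max_i \P[I\in \widetilde{\mathcal C}_i]$.
\item You then need $p\le R^{-\beta'}$. ``Stationarity-like uniformity'' does not give this: there is no stationarity, and near-uniform spreading of the argmax is not available.
\item Instead compare $\E[\max_{x+\Lambda_{R^{1-\beta}}} f]\sim\sqrt{2d(1-\beta)\log R}$, uniformly in $x$, with $\E[M_R]\sim\sqrt{2d\log R}$, and apply Borell--TIS.
\end{itemize}
Since $\bar f_R$ does not depend on $x$, the argmax of $g$ (resp.\ $g^t$) coincides with that of $f$ (resp.\ $f^t$). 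So do this for $f$, which has unit variance, rather than for $g$, which does not. With these ingredients you get $\textrm{Var}[\sup g]\le C\eta\, w(R)+C/(\beta'\log R)=o(w(R))$, and your route closes. You still need to discretise first, since the interpolation formula and hypercontractivity are finite-dimensional statements.
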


\begin{remarks}
$\,$
\begin{enumerate}[label=(\alph*)]
\item  Theorem \ref{t:main} contains Theorem \ref{t:my} as a special case. Indeed suppose, as in Theorem~\ref{t:my}, that $f$ is stationary and $K(n) \log n$ is non-decreasing. Then for every $\eta > 0$,
\[ K(n^{1/(1+\eta)})  \log(n^{1/(1+\eta)}) \le K(n) \log n \quad \Longrightarrow \quad K(n) \ge \frac{1}{1+\eta} K(n^{1/(1+\eta)}) .\]
Hence if $K(n)$ is also non-increasing, $K$ satisfies \eqref{e:af} (take $\beta = \eta/ (1+\eta) )$.
\vspace{0.1cm}

\item Ho--McCormick \cite[Corollary 2.4]{hm99} gave a variant of Theorem \ref{t:my} which relies on a different notion of regularity. Precisely they assumed, as well as $K(n) \log n \to \infty$, that
\begin{equation}
\label{e:hm}
  \frac{1}{n} \sum_{k=1}^n |K(k)-K(n)| = o(1/\log n) .
  \end{equation}
There is no strict relationship between \eqref{e:hm} and \eqref{e:af}: \eqref{e:hm} requires that $K$ be inside a narrow window of size $\ll 1/\log n$ \textit{on average}, whereas \eqref{e:af} requires that $K$ stay \textit{uniformly} within a much \textit{wider} window of size $\ll K(n)$. 

\item Recall that a function $w : \R^+ \to \R$ is said to be slowly varying (at infinity) if for every $u > 0$, $\lim_{v \to \infty} w(uv) / w(v) = 1$. In fact this implies that, for every $s \in (0,1)$,
\[   \sup_{u \in [s ,1] } \Big| \frac{ w( u v) }{ w(v) } - 1 \Big| \to 0 . \] 
The function $w$ in the definition of asymptotic flatness is necessarily slowly varying. 

\begin{question}
Does the conclusion of Theorem \ref{t:main}  hold if we replace `asymptotic flatness' with the assumption that $K(x,y) \sim w(|x-y|)$ for a slowly varying $w$?  
\end{question}

\item The proof in \cite{my75,my76} shows that, in Theorem \ref{t:my}, one can take $a_n = (1- K(n))^{1/2} \gamma_n$ and $b_n  = K(n)^{-1/2}$ for an explicit universal $\gamma_n$. Our approach shows that one can take $b_R = w(R)^{-1/2}$ in Theorem \ref{t:main}, but only provides an implicit expression $a_R = \E[M_R]$ for the centring constant.

\begin{question}
Under the conditions of Theorem \ref{t:main}, is 
\[ \E[M_R] =  (1- w(R))^{1/2} \gamma_R + o(w(R)^{-1/2}) \] 
for some explicit $\gamma_R$? What does $\gamma_R$ depend on?
\end{question}

\noindent Comparison with \cite{my75,my76} suggests that, in the discrete case, $\gamma_R$ is universal (i.e.\ depends only on the dimension), and in the continuous stationary case, depends only on the behaviour of $K$ at the origin.

\item We use a box $[0,R]^d$ in the definition of $M_R = \sup_{x \in [0,R]^d} f(x)$ for simplicity; in fact it can be replaced with any other rescaled set $RD$, $D$ a compact Lipschitz domain, without significant change to the proof.
\end{enumerate}

\end{remarks}

\subsection{Comparison of methods}
Let us describe the main idea of our proof and contrast it to that of \cite{my75, my76} and also of \cite{hm99}.

\smallskip
Our main idea is to show that the centred maximum $M_R - \E[M_R]$ is well-approximated by its `best linear approximation', i.e.\ the random variable  $Q_1(M_R)$ in the linear span of $f(x)$ which minimises $\E[(M_R - \E[M_R] -  Q_1(M_R))^2]$. Since $Q_1(M_R)$  is necessarily a Gaussian variable, the Gaussian limit for $M_R$ follows from showing that 
\[ \frac{ \textrm{Var}[Q_1(M_R)] }{\textrm{Var}[M_R] } \to 1 \quad \text{as } R \to \infty . \]
To achieve this we provide an explicit expression for $Q_1(M_R)$, which is new to our knowledge, and use it to lower bound the variance of $Q_1(M_R)$. By comparing this with a suitable upper bound on the variance of $M_R$, arising from the hypercontractivity approach of Chatterjee \cite{cha14}, we obtain the Gaussian limit.

\smallskip
In \cite{my75, my76}  the main idea is instead to decompose the process as $f  \stackrel{d}{=} b_n Z \oplus g$ where $Z$ is a standard Gaussian and $g$ is an independent Gaussian process. By using a comparison inequality of Berman \cite{ber64} one can compare the maximum of $b_n Z \oplus g$ with the maximum of  $b_n Z \oplus \hat{g}$, where $\hat{g}$ is an i.i.d.\ sequence of Gaussians. By showing that the maximum of the latter process has a limiting Gaussian distribution under the assumptions of Theorem \ref{t:my}, the Gaussian limit can be established for the original process. Significant technical difficulties arise in the use of Berman's comparison inequality, and to carry out this step appears to require strong regularity assumptions.   
 
 \smallskip
 The approach of \cite{hm99} is somewhat closer to ours; they consider the maximum of the process recentred by its partial average, and show that it is a Gaussian process with rapidly decaying correlations. Since the average is a Gaussian variable, and the maximum of the remainder process has smaller fluctuations, they are able to deduce the Gaussian limit. Their use of a linear statistic (the average) to approximate the maximum is reminiscent of our approach, however the relationship between the average and the best linear approximation $Q_1(M_R)$ is not clear in general.

\subsection{Acknowledgements}
This research was carried out as part of the first author's Honours Research Project at Monash University, supervised by the second author. The second author is supported by the Australian Research Council Future Fellowship FT240100396.

\medskip
\section{Best linear approximation for the maximum}

In this section we study the best linear approximation for the maximum. For simplicity we work with finite-dimensional Gaussian vectors; later we apply the results to continuous processes via approximation (see also Remark \ref{r:cp}).

\smallskip
Let $X = (X_i)_{1 \le i \le n}$ be a centred Gaussian vector with covariance $K(i,j) = \E[X_i X_j]$, let $M = \max_{1 \le i \le n} X_i$, and let $I \in \{1,\ldots,n\}$ be the index that attains the maximum, i.e.\ such that $X_I = M$. Under the assumption that $\textrm{Corr}(X_i, X_j) < 1$ for all $i \neq j$, $I$ is a.s.\ unique. By Gaussian concentration, $\textrm{Var}[M] \le \max_i \textrm{Var}[X_i]$ (see e.g.\ \cite[(1.1)]{cha14}), so that $M \in L^2$.

\smallskip
 Let $\mathcal{L}$ be the linear span of $X$, i.e.\ random variables $Y = \sum_{1 \le i \le n} \alpha_i X_i$ for some $\alpha_i \in \R$. Let $Q_1(M)$ denote the $L^2$-projection of $M - \E[M]$ onto $\mathcal{L}$. Equivalently, $Q_1(M)$ is the element of $\mathcal{L}$ that minimises $\E[ (M- \E[M] - Q_1(M) )^2 ]$. Observe that $Q_1(M)$ has a natural interpretation as the `best linear approximation' of $M$. It can also be viewed as the first component in the Weiner-It\^{o} chaos decomposition of $M$ (see \cite{jan97}).

\smallskip
The main result of this section is an explicit expression for $Q_1(M)$:

\begin{proposition}
\label{p:bla}
Suppose that $\textrm{Corr}(X_i, X_j) < 1$ for all $1 \le i \le j \le n$. Then
\[ Q_1(M) = \sum_{1 \le i \le n}   \mathbb{P}[I = i] X_i  . \]
In particular
\begin{equation}
\label{e:varq1}
 \textrm{Var} \big[Q_1(M) \big] = \sum_{1 \le i,j \le n} K(i,j)  \mathbb{P}[I = i]  \mathbb{P}[I = j] .
 \end{equation}
\end{proposition}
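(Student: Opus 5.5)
The plan is to characterise $Q_1(M)$ through the normal equations of the $L^2$-projection and then evaluate the cross-covariances $\E[M X_j]$ by Gaussian integration by parts. Since $\mathcal{L}$ is finite-dimensional, $Q_1(M)$ is the unique element $Y=\sum_i \alpha_i X_i$ of $\mathcal{L}$ (unique as a random variable, even if the coefficients are not) such that
\[ \E[(M-\E[M]-Y)X_j] = 0 \quad \text{for all } 1\le j\le n . \]
Because $\E[X_j]=0$, this reduces to $\E[Y X_j] = \E[M X_j]$ for every $j$. It therefore suffices to show that
\[ \E[M X_j] = \sum_i K(i,j)\,\P[I=i] . \]
The right-hand side is exactly $\E\big[\big(\sum_i \P[I=i]X_i\big)X_j\big]$, so this identity gives the claimed formula for $Q_1(M)$. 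Formula \eqref{e:varq1} then follows by expanding the variance of the linear combination.

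To compute $\E[M X_j]$ I would use the Gaussian integration by parts formula
\[ \E[X_j F(X)] = \sum_i K(i,j)\,\E[\partial_i F(X)] , \]
valid for Lipschitz $F$; it can be justified by representing $X=AZ$ with $Z$ standard Gaussian and applying the one-dimensional Stein identity coordinatewise. Take $F(x)=\max_i x_i$. This $F$ is $1$-Lipschitz and is differentiable away from the set where two coordinates tie, with $\partial_i F(x)=\id\{x_i > x_k \ \forall k\neq i\}$ there. The formula then gives $\E[\partial_i F(X)] = \P[I=i]$, as required.

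The main obstacle is justifying integration by parts for the non-smooth $F$, which amounts to checking that ties occur with probability zero. For $i\ne k$, the hypothesis $\mathrm{Corr}(X_i,X_k)<1$ means that $X_i-X_k$ is a Gaussian variable with positive variance, unless both variances vanish. In the degenerate case $X_i=X_k=0$ the two coordinates tie almost surely, and such coordinates would need a separate treatment or would be excluded (the hypothesis implicitly assumes positive variances). Otherwise $\P[X_i=X_k]=0$, so $\partial_i F(X)$ is defined almost surely and $I$ is a.s.\ unique.

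To avoid worrying about the support of a possibly degenerate Gaussian vector, I would write $F(AZ)$ as a Lipschitz function of $Z$. Such a function is differentiable almost everywhere by Rademacher's theorem, and its gradient is given by the chain rule wherever $AZ$ has no ties, which holds a.s. One can then apply the Stein identity for Lipschitz functions, $\E[Z_k G(Z)] = \E[\partial_k G(Z)]$. Alternatively, smooth the maximum by the soft-max $F_\beta(x)=\beta^{-1}\log\sum_i e^{\beta x_i}$. Its partial derivatives are the Gibbs weights, which converge a.s.\ to $\id\{I=i\}$ as $\beta\to\infty$, and dominated convergence (everything is bounded by $|X_j|$ times constants) passes the identity to the limit.
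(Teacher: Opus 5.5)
Your proof is correct and follows essentially the paper's route. Gaussian integration by parts identifies the coefficient of $X_i$ as $\E[\partial_i \max(X)] = \P[I=i]$, and the non-smoothness of the maximum is handled by the soft-max limit (or your direct Lipschitz/Rademacher argument). Your verification through the normal equations $\E[Y X_j] = \E[M X_j]$ is a slight improvement on the paper's expansion in an orthonormal basis $Z$: it works for degenerate $X$ directly and so removes the paper's ``add small independent noise and take limits'' reduction.
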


\begin{proof}
Without loss of generality we may assume that $X$ is non-degenerate (otherwise add an i.i.d.\ Gaussian vector of small variance and take limits). Let $F : \R^n \to \R$ be a smooth function with bounded first derivatives such that $\E[F(X)^2] < \infty$. Denote by $Q_1(F)$ the projection of $F(X) - \E[F(X)]$ onto $\mathcal{L}$. We first claim that
\begin{equation}
\label{e:smooth}
 Q_1(F) = \sum_{1 \le i \le n}  \E[ \partial_i F(X) ] X_i  . 
 \end{equation}
Indeed writing $X = Q Z$, for $Q = (Q_{i,j})$ and $(Z_i)$ a set of i.i.d.\ standard Gaussian random variables, and $G = F \circ Q$, then
\[ Q_1(F) = Q_1(G)  = \sum_{1 \le i \le n}  \E[ Z_i G(Z) ] Z_i  . \]
Integrating by parts 
\[ \E[ Z_i G(Z) ]  = \E[ \partial_i G(Z) ]  = \sum_{1 \le j \le n} Q_{i,j} \E[ \partial_j F(X) ]  \]
and so 
\[ Q_1(F) = \sum_{1 \le i \le n}  \E[ Z_i G(Z) ] Z_i   =  \sum_{1 \le j \le n}  \E[ \partial_j F(X) ]  \sum_{1 \le i \le n} Q_{i,j} Z_i = \sum_{1 \le j \le n}  \E[ \partial_j F(X) ] X_j . \]
We now apply \eqref{e:smooth} to the soft-max function
\[ F_\beta(X) := \beta^{-1} \log \Big( \sum_i e^{\beta X_i}  \Big)  \]
which satisfies $M \le  F_\beta(X)  \le M + \log n/\beta$. A direct computation gives
\[ \partial_i F_\beta(X) = \frac{e^{\beta X_i} }{\sum_i e^{\beta X_i} } \in [0,1] .\]
Amost surely, as $\beta \to \infty$, 
\[ \ \frac{e^{\beta X_i} }{\sum_i e^{\beta X_i} } \to  \id_{I = i}(X) , \]
hence by dominated convergence  $\lim_{\beta \to \infty} \E[  \partial_i F_\beta(X)  ]   =  \P[I = i] $. Inserting into \eqref{e:smooth} we see that
\begin{equation}
\label{e:conv}
\lim_{\beta \to \infty} Q_1(F_\beta) =   \sum_{1 \le i \le n} \P[I = i]  X_i 
\end{equation}
almost surely and in $L^2$.  Since $F_\beta(X) \to M$ in $L^2$, we also have $Q_1(F_\beta) \to Q_1(M)$ in $L^2$, so that we can identify $Q_1(M)$ with the right-hand side of \eqref{e:conv}.
\end{proof}

\begin{remark}
\label{r:cp}
Although we bypass this route, one could also prove a continuum analogue of Proposition \ref{p:bla} directly. Indeed, suppose that $f$ is a continuous centred Gaussian process on a compact set $ T \subset \R^d$, and let $M = \sup_{x\in T} f(x)$. Then, under suitable assumptions on $f$ and $T$, one can show that the best linear approximation of $M$ is $\int_{x \in T}  f(x) d\mu_I(x)$, where $d\mu_I$ is the distribution of $I$.
\end{remark}

We contrast \eqref{e:varq1} with a well-known `interpolation' formula for the variance of the maximum:
\begin{proposition}[{\cite[(1.4)]{cha14}}]
\label{p:var}
Suppose that $\textrm{Corr}(X_i, X_j) < 1$ for all $i \neq j$. For $t \in [0,1]$, let $X^t  \stackrel{d}{=}t X + \sqrt{1-t^2} \tilde{X}$, where $\tilde{X}$ is an independent copy of $X$, and define $I^t$ analogously to $I$ for the vector $X^t$ replacing $X$. Then
\[ \textrm{Var}[M] =  \int_0^1\sum_{1 \le i ,j \le n}  K(i,j)   \mathbb{P}[I = i, I^t = j]  \, dt  . \]
\end{proposition}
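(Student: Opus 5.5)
The plan is to follow Chatterjee's interpolation argument, applied to the soft-max $F_\beta$ as in the proof of Proposition \ref{p:bla}, and then let $\beta \to \infty$. As there, we may first assume $X$ is non-degenerate (perturb by a small independent Gaussian vector and pass to the limit at the end). The tool is the Ornstein--Uhlenbeck type covariance identity: for smooth $F,G$ with bounded derivatives,
\[ \textrm{Cov}[F(X),G(X)] = \int_0^1 \sum_{i,j} K(i,j)\, \E\big[\partial_i F(X)\, \partial_j G(X^t)\big]\, dt, \]
where $X^t = tX + \sqrt{1-t^2}\tilde X$, defined on the same probability space as $X$.

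To prove this identity, write $X = QZ$ as in the proof of Proposition \ref{p:bla}, which reduces it to the i.i.d.\ case. There, set $\phi(t) = \E[F(Z)G(Z^t)]$. Then $\phi(1) = \E[F G]$ and $\phi(0) = \E F \,\E G$. Differentiate in $t$ using $\frac{d}{dt}Z^t = Z - \frac{t}{\sqrt{1-t^2}}\tilde Z$. Gaussian integration by parts in $Z$ and in $\tilde Z$ then gives
\[ \phi'(t) = \sum_i \E\big[\partial_i F(Z)\, \partial_i G(Z^t)\big]. \]
Here the two terms carrying $\partial_i^2 G$ cancel, and the $t$ factors cancel against $\sqrt{1-t^2}$. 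Integrating over $[0,1]$ and transforming back with $Q$ produces the kernel $K(i,j)$.

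Apply the identity with $F = G = F_\beta$. Then $\partial_i F_\beta(X) = e^{\beta X_i}/\sum_k e^{\beta X_k}$, which lies in $[0,1]$ and converges a.s.\ to $\id_{I=i}$ as $\beta\to\infty$. The same holds for $X^t$, giving $\id_{I^t = j}$; this uses that $I^t$ is a.s.\ unique, which holds because $X^t \stackrel{d}{=} X$. By dominated convergence, applied jointly in $\omega$ and $t\in[0,1]$, the right-hand side converges to $\int_0^1 \sum_{i,j} K(i,j)\, \P[I=i, I^t=j]\, dt$. On the left, $0 \le F_\beta - M \le \log n/\beta$, so $\textrm{Var}[F_\beta(X)] \to \textrm{Var}[M]$.

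The only genuinely delicate step is the derivative computation in the interpolation identity: justifying differentiation under the expectation, and handling the $1/\sqrt{1-t^2}$ singularity at $t = 1$. Both are routine here. The derivatives are bounded, so the bracket in $\phi'(t)$ is integrable, and after integration by parts the singular factor cancels exactly, leaving a bounded integrand. Everything else is immediate, so I would cite \cite{cha14} for the identity and give the limiting argument explicitly.
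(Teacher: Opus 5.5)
Your proof is correct. The paper does not prove this statement itself; it cites Chatterjee's interpolation formula, and your argument (the Ornstein--Uhlenbeck covariance identity $\textrm{Cov}[F(X),G(X)]=\int_0^1\sum_{i,j}K(i,j)\,\E[\partial_iF(X)\,\partial_jG(X^t)]\,dt$ obtained by differentiating $t\mapsto\E[F(Z)G(Z^t)]$ and integrating by parts, followed by the soft-max limit $\beta\to\infty$ exactly as in the proof of Proposition~\ref{p:bla}) is the standard derivation of that cited result. Your preliminary reduction to non-degenerate $X$ is harmless but unnecessary, since $X\stackrel{d}{=}K^{1/2}Z$ holds for any covariance $K$.
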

The process $X^t$ defines an interpolation between $X$ and an independent copy $\tilde{X}$ along the Ornstein-Uhlenbeck semigroup. Note that, compared to \cite[(1.4)]{cha14}, we prefer to use a different parameterisation for this interpolation (our $t$ is $e^{-t}$ in \cite[(1.4)]{cha14}).

\medskip
\section{Gaussian limit for the maximum}

In this section we complete the proof of Theorem \ref{t:main}. For brevity we focus on case of continuous processes, since the proof in the discrete case is simpler. Throughout the section we work under the assumptions of Theorem \ref{t:main}.

\smallskip
Define $\Lambda_R = [0,R]^d$, and let $I_R \in \Lambda_R$ be the index that attains the maximum, i.e.\ such that $M_R = f(I_R)$. Let us assume for the time being that $K(x,y) < 1$, $x \neq y$, so that $I_R$ is uniquely defined almost surely (\cite[Lemma 2.6]{kp90}); we lift this assumption at the end of the section.

\subsection{Discretisation}
It will be convenient to study a discretised version of the maximum defined as follows. Let $\eps > 0$ be discretisation parameter. Define $\Lambda_{R,\eps} = \Lambda_R \cap  (\eps \Z^d)$, and let $X_{R,\eps}$ be the finite-dimensional Gaussian vector $f |_{\Lambda_{R,\eps}}$. Let $M_{R,\eps} := \max_{x \in \Lambda_{R,\eps} } f(x) \le M_R$ be the maximum of $X_{R,\eps}$, and define $I_{R,\eps} \in \Lambda_{R,\eps}  \subset \Lambda_R$, the (a.s.\ unique) index of the maximum.

\smallskip
Let us record here the key properties of the discretisation:

\begin{lemma}
\label{l:dis}
For every $R > 0$, as $\eps \to 0$
\[ M_{R,\eps} \to M_R \qquad \text{and} \qquad I_{R,\eps} \to I_R \]
almost surely and in $L^2$.
\end{lemma}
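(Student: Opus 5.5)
The plan is to use continuity of $f$ and almost-sure uniqueness of $I_R$ to get almost-sure convergence, and then domination to upgrade to $L^2$. The argument works along any sequence $\eps\to 0$. One point needs care: $\Lambda_{R,\eps}$ is not nested unless $\eps$ runs through dyadic values. This does not matter, because we only use that $\Lambda_{R,\eps}$ becomes dense in $\Lambda_R$ with mesh at most $\sqrt d\,\eps$.

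\textbf{Convergence of the maximum.} Fix a realisation in the full-probability event where $f$ is continuous on the compact set $\Lambda_R$ and $I_R$ is unique. Then $f$ is uniformly continuous on $\Lambda_R$, with modulus $\omega$. There is a lattice point $x_\eps\in\Lambda_{R,\eps}$ with $|x_\eps-I_R|\le\sqrt d\,\eps$; this holds for all $\eps$ small relative to $R$, since $\Lambda_R$ is a box. Hence
\[ M_R-\omega(\sqrt d\,\eps)\le f(x_\eps)\le M_{R,\eps}\le M_R , \]
so $M_{R,\eps}\to M_R$ almost surely.

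\textbf{Convergence of the argmax.} Let $y$ be any limit point of $I_{R,\eps}$; one exists by compactness. Continuity gives $f(y)=\lim f(I_{R,\eps})=\lim M_{R,\eps}=M_R$. By uniqueness of the maximiser, $y=I_R$. Since every subsequence of $I_{R,\eps}$ has a further subsequence converging to $I_R$, we get $I_{R,\eps}\to I_R$ almost surely.

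\textbf{Upgrade to $L^2$.} For the argmax, $|I_{R,\eps}-I_R|\le\sqrt d\,R$ is bounded, so bounded convergence gives $L^2$ convergence immediately. For the maximum, we have the two-sided bound
\[ |M_{R,\eps}-M_R|\le |f(0)|+\sup_{\Lambda_R}|f| . \]
The right-hand side is square integrable: by Borell--TIS (Gaussian concentration), $\E[\sup_{\Lambda_R}|f|^2]<\infty$, because $f$ is continuous, hence almost surely bounded, on a compact set. Dominated convergence then gives $M_{R,\eps}\to M_R$ in $L^2$.

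\textbf{Main obstacle.} The only delicate step is the argmax convergence, and it relies entirely on uniqueness of $I_R$, which is assumed via \cite[Lemma 2.6]{kp90} under $K(x,y)<1$ for $x\neq y$. The square-integrability of the supremum is standard. In the discrete setting $\Z^d$, the lemma is trivial once $\eps\le 1$, provided one works along $\eps=1/k$ with $k\in\mathbb{N}$: then $\Z^d\subset\eps\Z^d$, so $\Lambda_{R,\eps}=\Lambda_R\cap\Z^d$ and both quantities are constant in $\eps$.
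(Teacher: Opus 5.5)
Your proof is correct and follows the same route as the paper. Continuity of $f$ together with almost-sure uniqueness of $I_R$ gives the almost-sure convergence, and dominated (bounded) convergence upgrades it to $L^2$. Your two-sided bound $|M_{R,\eps}-M_R|\le |f(0)|+\sup_{\Lambda_R}|f|$ is slightly more careful than the paper's one-sided domination $M_{R,\eps}\le M_R\in L^2$, since it uses $0\in\Lambda_{R,\eps}$ to control $M_{R,\eps}$ from below as well, which dominated convergence needs.
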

\begin{proof}
Since $f$ is continuous, and its maximum $M_R$ is attained at the a.s.\ unique point $I_R$, the almost sure convergence of $M_{R,\eps} \to M_R$ and $I_{R,\eps} \to I_R$ is immediate. The $L^2$ convergence follows by dominated convergence since $M_{R,\eps} \le M_R \in L^2$ and $\| I_{R,\eps}\|_{\infty} \le R$.
\end{proof}

Since $M_{R,\eps} \to M_R$ in $L^2$, the limit of the projections $\lim_{\eps \to 0} Q_1(M_{R,\eps})$ exists in $L^2$, and we denote this limit by $Q_1(M_R)$. One can view $Q_1(M_R)$ as the projection of $M_R$ onto the linear subspace spanned by $f$, but for our purposes it is enough to know that $Q_1(M_R)$ is a Gaussian variable (as the $L^2$ limit of Gaussian variables).

\subsection{Hypercontractivity and delocalisation} 
We next state some auxiliary estimates that involve the hypercontractivity property of the Ornstein-Uhlenbeck semigroup, applied to the discretised vector $X_{R,\eps}$. In the following lemma we abbreviate $I = I_{R,\eps}$:

\begin{lemma}[Hypercontractivity]
\label{l:hyp} 
For $t \in [0,1]$, let $f^t  \stackrel{d}{=} t f + \sqrt{1-t^2} \tilde{f}$, where $\tilde{f}$ is an independent copy of $f$, and define $I^t$ analogously to $I$ for $f^t$ replacing $f$. Then for every $S \subset \Lambda_{R,\eps}$,
\[ \P[I \in S, I^t \in S] \le  \P[I \in S]^{1+(1-t)/2}.\]
\end{lemma}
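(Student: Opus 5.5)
We write $g(X) := \id_{I \in S}$, viewed as a function of the finite Gaussian vector $X = X_{R,\eps}$, and let $(P_s)_{s \ge 0}$ denote the Ornstein--Uhlenbeck semigroup associated with the law of $X$. The plan is to express the joint probability as an inner product involving this semigroup and then apply Nelson's hypercontractivity inequality.

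The first step is to identify $\P[I \in S, I^t \in S]$ with $\E[g(X) (P_s g)(X)]$ for $t = e^{-s}$. By construction $I^t$ is the same measurable function of $X^t = tX + \sqrt{1-t^2}\tilde X$ as $I$ is of $X$. Conditioning on $X$ therefore gives
\[ \P[I \in S, I^t \in S] = \E\big[ g(X)\, \E[ g(tX + \sqrt{1-t^2}\tilde X) \mid X] \big] = \E[ g(X) (P_s g)(X)], \]
which is exactly Mehler's formula. This identity holds for general (possibly degenerate) covariance: write $X = QZ$ with $Z$ i.i.d.\ standard Gaussian and apply the formula to $g \circ Q$. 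The a.s.\ uniqueness of $I$ guarantees that $g$ is well defined almost surely.

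The second step applies H\"older's inequality and then hypercontractivity. Choose $q = 1 + t^2$, so that Nelson's theorem gives $\|P_s g\|_{L^p} \le \|g\|_{L^q}$ for $p = 1 + (q-1)e^{2s} = 2$. Writing $q'$ for the conjugate exponent, H\"older yields
\[ \E[g P_s g] \le \|g\|_{q'}\, \|P_s g\|_{q}. \]
It is cleaner, however, to use the self-adjoint splitting $\E[g P_s g] = \|P_{s/2} g\|_2^2$. Hypercontractivity with $p = 2$ and $q = 1 + e^{-s} = 1 + t$ gives $\|P_{s/2} g\|_2 \le \|g\|_{1+t}$. Since $g$ is an indicator,
\[ \|g\|_{1+t}^2 = \P[I \in S]^{2/(1+t)}. \]

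The final step is an elementary comparison of exponents. We need $2/(1+t) \ge 1 + (1-t)/2$ on $[0,1]$, because $\P[I \in S] \le 1$ means a larger exponent gives a smaller bound. We have $2/(1+t) - 1 = (1-t)/(1+t) \ge (1-t)/2$ precisely when $1 + t \le 2$, which always holds. This gives the stated bound.

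The main point requiring care is the correct pairing of exponents with the time parameter, since the paper's $t$ corresponds to $e^{-s}$ in the usual semigroup parametrisation. Apart from that, the argument is a standard hypercontractivity estimate for indicator functions.
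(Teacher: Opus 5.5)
Your proof is correct and is essentially the paper's argument: the paper uses H\"older with conjugate exponents $1+t$ and $1+1/t$ plus hypercontractivity from $L^{1+t}$ to $L^{1+1/t}$, which gives the same bound $\P[I\in S]^{2/(1+t)}$ (their $\alpha(t)$ simplifies to $2/(1+t)$). You reach that bound instead via $\E[g\,P_s g]=\|P_{s/2}g\|_2^2$ and hypercontractivity into $L^2$, and the final exponent comparison is identical. The abandoned H\"older line pairing $\|g\|_{q'}$ with $\|P_s g\|_{q}$ mismatches the exponents and should simply be deleted.
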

\begin{remark}
The bound is clearly not tight for $t \approx 0$ since in fact $ \P[I \in S, I^0 \in S] =  \P[I \in S]^{2}$. The interesting feature is that $\P[I \in S, I^t \in S] \ll \P[I \in S]$ near the degeneracy point $t \approx 1$.
\end{remark}
\begin{proof}
We follow the proof of similar results in \cite{cha14}. Let $q, q' > 0$ be such that $1/q + 1/q' = 1$. Then by H\"{o}lder's inequality
\begin{equation}
\label{e:hyp}
 \P[ I \in S, I^t \in S] = \E \big[ \id_{I \in S} \P [ I^{t} \in S  | f ]  \big] \le  \P[I \in S]^{1/q} \Big( \E \big[ \P[ I^t \in S | f ]^{q'} \big] \Big)^{1/q'} . 
 \end{equation}
The hypercontractivity property of the Ornstein-Uhlenbeck semigroup (see \cite[Appendix B]{cha14}, but note the different parameterisation used therein) states that for any $p > 1$, $F \in L^p$, and $t \in (0,1]$, 
\[ \Big( \E\big[  \E \big[ F[f^t] \big| f \big]^{q' } \big] \Big)^{1/q' }  \le  \Big( \E[F[f]^{p} ] \Big)^{1/p} \]
where $q'= 1 + (p-1)/t^2 > 1$. Setting $p = 1 + t$, applying this to $F = \id_{I \in S}$, and inserting in \eqref{e:hyp}, gives
\[ \P[ I \in S, I^t \in S] \le  \P[ I \in S]^{1/q + 1/p} = \P[I \in S]^{1 - 1/q' + 1/p} = \P[I \in S]^{\alpha(t)} \]
where 
\[ \alpha(t) = 1 + \frac{1}{1+t}- \frac{1}{1+1/t} .\] 
One can check that
\[ \frac{1}{1+t} - \frac{1}{1+1/t} \ge \frac{1-t}{2} \ , \quad t \in [0,1], \]
(the left-hand side is convex and its derivative at $t=1$ is $-1/2$) which gives the result.
\end{proof}

To apply the above estimate we need to know that $I_R$ is not too localised. We prove this using a basic estimate on the growth of the maximum:

\begin{lemma}[Growth of the maximum]
\label{l:max}
Let $M_R^x = \max_{y \in x + \Lambda_R} f(y)$, so that $M_R = M_R^0$. Then
\[ \frac{M^x_R}{\sqrt{2d \log R}}  \stackrel{\P}{\longrightarrow} 1 \ , \quad \text{as } R \to \infty , \]
uniformly over $x \in \mathbb{R}^d$.
\end{lemma}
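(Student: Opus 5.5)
We prove matching upper and lower bounds: for every $\delta > 0$,
\[ \sup_x \P\big[ M_R^x > (1+\delta)\sqrt{2d\log R}\big] \to 0 \quad \text{and} \quad \sup_x \P\big[ M_R^x < (1-\delta)\sqrt{2d\log R}\big] \to 0 . \]
By stationarity of the hypotheses under translation (the assumptions of Theorem \ref{t:main} are uniform in $x$), the argument depends on $x$ only through quantities controlled uniformly, so we fix $x$ and track the constants.

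\emph{Upper bound.} Cover $x + \Lambda_R$ by $O(R^d)$ unit cubes. On each unit cube $C$, continuity of $f$ together with unit variance gives $\E[\sup_C f] \le c$ uniformly in the cube, via Dudley's bound applied to the canonical metric. This requires a modulus of continuity of $K$ near the diagonal that is uniform in $x$. I will take this as part of the standing regularity (the case $\Z^d$ is trivial here). Borell--TIS then gives $\P[\sup_C f > c + u] \le e^{-u^2/2}$. A union bound over the $O(R^d)$ cubes with $u = (1+\delta/2)\sqrt{2d\log R}$ gives a total probability of order $R^{d} R^{-d(1+\delta/2)^2} \to 0$.

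\emph{Lower bound.} This is the main step, since the correlations are strong. Pick a sparse grid of points $x_1,\dots,x_N$ in $x+\Lambda_R$ with spacing $R^{1-\gamma}$, so that $N \asymp R^{d\gamma}$. For distinct points, $K(x_i,x_j) \le 2w(|x_i-x_j|)$ for $R$ large. By asymptotic flatness, $w(|x_i-x_j|) \approx w(R)$, and $w(R) \to 0$ because $w$ is slowly varying and $K \to 0$ is implicit in $K\log \to \infty$ with $K \le 1$. Actually only $\rho := \max_{i\ne j} K(x_i,x_j) \to 0$ is needed, and this follows from $K(x,y) \sim w(|x-y|)$ provided $w \to 0$. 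The condition $w \to 0$ is itself a hypothesis needed for the statement, and I will assume it.

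Now compare with an equicorrelated vector via Slepian's inequality. The vector $(f(x_i))$ has correlations at most $\rho$, so
\[ \P\big[\max_i f(x_i) < t\big] \le \P\big[\sqrt{\rho}\,Z + \sqrt{1-\rho}\,\max_i Y_i < t\big], \]
where $Y_i$ are i.i.d.\ standard Gaussians and $Z$ is an independent standard Gaussian. Here $\max_i Y_i = (1+o(1))\sqrt{2d\gamma\log R}$ in probability, and $\sqrt{\rho}\, Z = o_\P(1)$. Hence $\max_i f(x_i) \ge (1-\delta)\sqrt{2d\gamma\log R}$ with probability tending to one. Letting $\gamma \uparrow 1$ handles the remaining factor.

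There is one catch: the Slepian comparison requires correlations bounded by $\rho$ both from above and in the right direction. Negative correlations only help, since Slepian's inequality requires $K \le$ the comparison covariance, and that holds here. So the lower bound is fine. The only delicate point is the uniformity of $\rho$ over pairs at distance at least $R^{1-\gamma}$. This follows from $K(x,y)\sim w(|x-y|)$ together with $\sup_{s\ge R^{1-\gamma}} w(s) \to 0$, which holds because $w$ is slowly varying and tends to $0$.
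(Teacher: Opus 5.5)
Your proof is correct under the two hypotheses you add explicitly, and it takes a different route from the paper. The paper gives no argument of its own. It cites Pickands' theorem for stationary processes on $\Z$ or $\R$ and asserts that the proof extends to non-stationary fields with unit variance and $K(x,y)\to 0$, with uniformity in $x$ when $K(x,y)\le\omega(|x-y|)$ for some $\omega(s)\to 0$.

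You instead give a self-contained argument in every dimension. The upper bound comes from Borell--TIS plus a union bound over $O(R^d)$ unit cubes. The lower bound comes from a Slepian comparison between a sub-grid of spacing $R^{1-\gamma}$ and an equicorrelated vector. Uniformity in $x$ is automatic there, since neither $N$ nor $\rho=\sup_{|x-y|\ge R^{1-\gamma}}K(x,y)$ depends on $x$.

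What your route buys is transparency about what the lemma actually needs, which the citation hides:
\begin{itemize}
\item The decay $\sup_{|x-y|\ge s}K(x,y)\to 0$ is a genuine extra hypothesis. Your remark that it is `implicit in $K\log\to\infty$ with $K\le 1$' is false. Take $f=\sqrt{c}\,Z+\sqrt{1-c}\,g$ with $g$ having compactly supported correlations: it satisfies all hypotheses of Theorem~\ref{t:main} with $w\equiv c$, yet $M_R/\sqrt{2d\log R}\to\sqrt{1-c}$. You nevertheless go on to assume the decay, which is right, and the paper's proof assumes it too.
\item A bound $\sup_C\E[\sup_C f]<\infty$ over all unit cubes $C$ is automatic for stationary continuous fields and on $\Z^d$. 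In the continuous non-stationary case, however, it is a genuine extra hypothesis that the paper's `easy to check' tacitly uses. A field can keep the same long-range covariance while becoming arbitrarily rough far from the origin, and then the lemma fails. You can simply assume this uniform bound directly, rather than deriving it via Dudley from a modulus of continuity.
\end{itemize}

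In short, the paper's approach buys brevity. Yours makes the $d$-dimensional, non-stationary, uniform-in-$x$ extension actually verifiable, with its hypotheses visible.
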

\begin{proof}
If $f$ is a stationary Gaussian process on $\Z$ or $\R$ such that $K(x) \to 0$ as $x \to 0$, this was proven in \cite[Theorem 3.4]{pic67}. It is easy to check that the proof goes through also for non-stationary processes under the assumptions that $K(x,x) = 1$ for all $x$ and $K(x,y) \to 0$ as $|x-y| \to \infty$, and that the convergence in probability is uniform if $K(x,y) \le \omega(|x-y|)$ for some $\omega(s) \to 0$ as $s \to \infty$.
\end{proof}

We deduce the follow delocalisation estimate for $I_R$:

\begin{lemma}[Delocalisation of the maximum]
\label{l:deloc}
For every $\beta > 0$ there exists a $\beta' > 0$ such that, for sufficiently large $R$
 \[  \sup_{x \in \Lambda_{R}}   \P \big[  I_R \in x + \Lambda_{R^{1-\beta}}  \big]  \le R^{-\beta'} . \]
\end{lemma}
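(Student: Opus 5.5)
The plan is to compare the maximum over the small box $B_x := x + \Lambda_{R^{1-\beta}}$ with the maximum over the whole box $\Lambda_R$, using Lemma \ref{l:max} at the two scales. The event $\{I_R \in B_x\}$ forces $M_R = \max_{y \in B_x \cap \Lambda_R} f(y) \le M^{x}_{R^{1-\beta}}$. By Lemma \ref{l:max}, uniformly in $x$, $M^x_{R^{1-\beta}} \approx \sqrt{2d(1-\beta)\log R}$ while $M_R \approx \sqrt{2d\log R}$. The two levels differ by a factor $\sqrt{1-\beta} < 1$, so the event requires either the local maximum to be atypically large or the global maximum to be atypically small. Convergence in probability alone only shows that the probability tends to $0$, so I need quantitative tails on both sides to get a polynomial bound $R^{-\beta'}$.

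\textbf{Upper tail of the local maximum.} Fix a level $u = \sqrt{2d(1-\beta/2)\log R}$. For the local maximum I would use a union bound combined with a Borell--TIS / Dudley-type estimate on unit cubes. Continuity of $f$ and the (locally uniform) regularity implicit in the setting give $\E[\sup_{\text{unit cube}} f] \le C$ uniformly. Hence, covering $B_x$ by $O(R^{d(1-\beta)})$ unit cubes,
\[ \P[M^x_{R^{1-\beta}} > u] \le C R^{d(1-\beta)} e^{-(u-C)^2/2} \le R^{d(1-\beta) - d(1-\beta/2) + o(1)} = R^{-d\beta/2 + o(1)}. \]
This is uniform in $x$.

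\textbf{Lower tail of the global maximum.} This is the main obstacle: I need $\P[M_R \le u] \le R^{-\beta''}$. Borell--TIS gives concentration of $M_R$ around $\E[M_R]$ at scale $O(1)$, with Gaussian tails $e^{-s^2/2}$. So if $\E[M_R] \ge \sqrt{2d\log R}(1-o(1))$, which follows from Lemma \ref{l:max} together with uniform integrability via concentration, then the deviation is $s \approx (1-\sqrt{1-\beta/2})\sqrt{2d\log R} \ge c\beta\sqrt{\log R}$. This gives
\[ \P[M_R \le u] \le e^{-c^2\beta^2 \log R} = R^{-c^2\beta^2}, \]
which is polynomial. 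Combining the two bounds yields the statement with $\beta' = \min(d\beta/4, c^2\beta^2/2)$ for large $R$.

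If the local regularity needed for the unit-cube bound is unavailable, I would instead apply Lemma \ref{l:max} to the expectation: $\E[M^x_{R^{1-\beta}}] \le \sqrt{2d(1-\beta)\log R}(1+o(1))$ uniformly in $x$. The upper tail then also follows from Borell--TIS, giving $R^{-c\beta^2}$. Upgrading convergence in probability to convergence of expectations uses that $\textrm{Var}[M] \le 1$ (Gaussian concentration), so the family is uniformly integrable after normalisation. This alternative route is the more robust one, and I would try it first.
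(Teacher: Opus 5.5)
Your argument, in the Borell--TIS form you say you would try first, is essentially the paper's proof. It uses the same inclusion $\{I_R \in x+\Lambda_{R^{1-\beta}}\} \subset \{\bar{M}^x_R \ge t\} \cup \{M_R \le t\}$ at an intermediate level $t = t_0\sqrt{\log R}$ with $t_0 \in (\sqrt{2d(1-\beta)},\sqrt{2d})$. The expectations of both maxima are identified via Lemma~\ref{l:max} together with $\textrm{Var}[\sup f] \le 1$, and Borell--TIS is applied on each side. Your first route, via a bound on $\E[\sup f]$ over unit cubes uniform in space, would need local regularity that the hypotheses do not give for non-stationary $f$, so preferring the second route is right; the case $\beta \ge 1$ then follows from $\beta<1$ by monotonicity in the box size.
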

\begin{proof}
It suffices to prove the result for $\beta \in (0,1)$. Let $\bar{M}_R^x = \max_{y \in x + \Lambda_{R^{1-\beta}} } f(y)$. Observe that for every $t > 0$, 
\begin{equation}
\label{e:union}
\{ I_R \in x + \Lambda_{R^{1-\beta}} \}  \Longrightarrow \{ \bar{M}_R^x \ge t \} \cup \{M_R \le t\}  .
\end{equation}
Fix $t_0 \in (\sqrt{2d (1-\beta)} , \sqrt{2d} ) $. Combining Lemma \ref{l:max} with the fact that, by Gaussian concentration,
\[  \textrm{Var} \Big[ \sup_{x \in T} f(x) \Big] \le \max_y \textrm{Var}[f(y)] = 1 \quad  \forall T \subset \R^d \]
we have, as $R \to \infty$,
\[  \sup_{x \in \R^d} \Big| \frac{ \E[\bar{M}^x_R]  }{  \sqrt{2d (1- \beta) \log R}  } - 1 \Big| \to 0 \quad \text{and} \quad   \E[ M_R ] \sim \sqrt{2d \log R} .\]
Taking $t = t_0 \sqrt{\log R} $ in \eqref{e:union} and applying the union bound and the Borell-TIS inequality (\cite[Appendix A5]{cha14}) yields that for $R \to \infty$ eventually
\begin{align*}
 \P[ I_R \in  x + \Lambda_{R^{1-\beta}} ] & \le  \P[ \bar{M}_R^x \ge t_0 \log R] + \P[ M_R \le t_0 \log R] \\
 &  \le  e^{- (t_0 \log R -\E[  \bar{M}_R^x ] )^2 / 2} +  e^{- (\E[  M_R ]  - t_0 \log R )^2 / 2}  \\
 &  \le e^{-(t_0 -  \sqrt{2d \beta} )^2  (\log R)  /3} + e^{-(\sqrt{2d} - t_0)^2 (\log R) / 3}  
 \end{align*}
uniformly over $x \in \Lambda_R$, which gives the result.
\end{proof}

\subsection{Variance bounds}
 Next we prove lower and upper bounds on $ \textrm{Var}[Q_1(M_R)] $ and $ \textrm{Var}[M_R] $ respectively, from which the Gaussian limit for $M_R$ will follow. Let $w$ be the function provided by the assumption of asymptotic flatness.
 
\begin{proposition}
\label{p:lb}
\[ \liminf_{R \to \infty} \liminf_{\eps \to 0}  w(R)^{-1} \textrm{Var}[ Q_1( M_{R,\eps}  )  ] \ge 1. \]
\end{proposition}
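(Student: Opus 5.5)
The plan is to start from the explicit formula \eqref{e:varq1} of Proposition \ref{p:bla}, applied to the discretised vector $X_{R,\eps}$:
\[ \textrm{Var}[Q_1(M_{R,\eps})] = \E\big[ K(I_{R,\eps}, I'_{R,\eps}) \big], \]
where $I'_{R,\eps}$ is an independent copy of $I_{R,\eps}$. First I will let $\eps \to 0$. By Lemma \ref{l:dis}, $I_{R,\eps} \to I_R$ almost surely. Since $K$ is continuous and bounded by $1$, dominated convergence gives
\[ \liminf_{\eps\to 0} \textrm{Var}[Q_1(M_{R,\eps})] = \E[K(I_R, I_R')] \]
with $I_R, I_R'$ independent. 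It therefore suffices to show that $\liminf_R w(R)^{-1}\E[K(I_R,I_R')] \ge 1$.

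Fix $\eta>0$ and let $\beta>0$ be given by Definition \ref{d:af}. I split according to whether $|I_R - I_R'| \ge R^{1-\beta/2}$ or not.

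On the far event, $|I_R-I_R'|$ lies in $[R^{1-\beta/2}, \sqrt d R]$, so $|I_R-I_R'| = u\,(\sqrt d R)$ with $u \in [(\sqrt d R)^{-\beta},1]$ for large $R$. Combining $K(x,y)\sim w(|x-y|)$ with \eqref{e:af} then gives $K(I_R,I_R') \ge (1-\eta)^2 w(\sqrt d R)$. Slow variation (Remark (c)) gives $w(\sqrt d R)\sim w(R)$. Note also that $w(R)>0$ eventually, since $w(R)\log R\to\infty$.

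On the near event I only use $K \ge -1$. The aim is to show that its probability is $o(w(R))$. Conditioning on $I_R'$ and covering the ball of radius $R^{1-\beta/2}$ around $I_R'$ by $O(1)$ boxes of side $R^{1-\beta/2}$, Lemma \ref{l:deloc} (applied with $\beta/2$) bounds this probability by $C R^{-\beta'}$. Since $w(R)\log R\to\infty$, we have $w(R)\gg 1/\log R \gg R^{-\beta'}$.

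Combining the two events,
\[ \E[K(I_R,I_R')] \ge (1-\eta)^2 w(R)(1+o(1))\big(1 - CR^{-\beta'}\big) - CR^{-\beta'}, \]
so the liminf of the ratio is at least $(1-\eta)^2$. Letting $\eta\to 0$ concludes.

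The main subtlety I expect is the near event. Its contribution can be negative, which is why polynomial delocalisation, rather than mere $o(1)$ delocalisation, is needed to beat $w(R)$. One must also check that Lemma \ref{l:deloc}'s boxes $x+\Lambda_{R^{1-\beta}}$, possibly extending outside $\Lambda_R$, cover the relevant neighbourhoods uniformly. A further minor point is the uniformity in the asymptotic $K(x,y)\sim w(|x-y|)$, which is assumed to hold as $|x-y|\to\infty$ uniformly in position; this is how it is used here.
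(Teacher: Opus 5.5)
Your proof is correct and follows essentially the paper's argument. You split $\E[K(I,I')]=\sum K(x,y)\P[I=x]\P[I=y]$ into far pairs, where asymptotic flatness gives $K\ge(1-\eta)w(R)$, and near pairs, where $|K|\le 1$ and the polynomial bound of Lemma \ref{l:deloc} is $o(w(R))$ because $w(R)\gg 1/\log R$. The differences are only technical. You pass to $\eps\to 0$ first, using continuity of $K$ and dominated convergence, and you normalise distances by $\sqrt d R$ with cutoff $R^{1-\beta/2}$. This handles distances in the box exceeding $R$, a point the paper's direct comparison with $w(R)$ passes over.
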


\begin{proposition}
\label{p:ub}
\[ \limsup_{R \to \infty} \limsup_{\eps \to 0}  w(R)^{-1}  \textrm{Var}[ M_{R,\eps}    ] \le 1 . \]
\end{proposition}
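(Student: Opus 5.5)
We prove Proposition \ref{p:ub} by combining the interpolation formula of Proposition \ref{p:var} with the hypercontractivity estimate of Lemma \ref{l:hyp}, splitting the covariance according to whether the two maximisers $I = I_{R,\eps}$ and $I^t$ are far apart or close. Applying Proposition \ref{p:var} to the vector $X_{R,\eps}$ gives
\[ \textrm{Var}[M_{R,\eps}] = \int_0^1 \E\big[ K(I, I^t) \big] \, dt . \]
Fix $\eta>0$ and take $\beta>0$ from asymptotic flatness, so that for $R$ large, $K(x,y) \le (1+2\eta) w(R)$ whenever $R^{1-\beta} \le |x-y| \le \sqrt{d} R$. This uses $K(x,y)\sim w(|x-y|)$, \eqref{e:af}, and the slow variation of $w$ on $[R,\sqrt d R]$. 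Since $K \le 1$ everywhere, we get
\[ \E[K(I,I^t)] \le (1+2\eta) w(R) + \P\big[ |I - I^t| < R^{1-\beta} \big] . \]

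The main step is to show that $\int_0^1 \P[|I-I^t|<R^{1-\beta}]\,dt = o(w(R))$, uniformly in small $\eps$. Since $w(R) \gg 1/\log R$ by the hypothesis $K\log|x-y|\to\infty$, it suffices to show this integral is $O(1/\log R)$, or even $o(1/\log R)$. First cover $\Lambda_R$ by $N \asymp R^{d\beta}$ boxes $S_k$ of side $R^{1-\beta}$. If $|I-I^t|<R^{1-\beta}$, then $I$ and $I^t$ both lie in some enlarged box $\tilde S_k$ of side $3R^{1-\beta}$, and the enlarged boxes have bounded overlap. Apply Lemma \ref{l:hyp} to each $\tilde S_k \cap \Lambda_{R,\eps}$:
\[ \P[|I-I^t|<R^{1-\beta}] \le \sum_k \P[I\in\tilde S_k]^{1+(1-t)/2} \le C \max_k \P[I\in \tilde S_k]^{(1-t)/2} . \]
By Lemma \ref{l:deloc} together with the discretisation Lemma \ref{l:dis}, we have $\max_k \P[I_{R,\eps}\in\tilde S_k] \le R^{-\beta'}$ for $\eps$ small and $R$ large, with $\beta'$ slightly reduced to absorb the enlargement. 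Transferring the estimate from $I_R$ to $I_{R,\eps}$ uses a.s. convergence and a slightly larger box. The integral is then bounded by
\[ C\int_0^1 R^{-\beta'(1-t)/2}\,dt \le \frac{2C}{\beta' \log R}, \]
which is $O(1/\log R)$. This bound is not quite $o(1/\log R)$, but that is enough: $w(R)\log R\to\infty$ makes it $o(w(R))$.

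Combining the two bounds gives $\limsup_R \limsup_{\eps\to0} w(R)^{-1}\textrm{Var}[M_{R,\eps}] \le 1+2\eta$, and letting $\eta\to 0$ finishes the proof. The main obstacle is the near-diagonal estimate, in particular securing the uniformity of the delocalisation bound for the discretised maximiser over all boxes and all small $\eps$. If a.s. convergence is too weak for this uniformity, the fallback is to prove Lemma \ref{l:deloc} directly for $M_{R,\eps}$. The Borell-TIS argument goes through unchanged for $\eps\le 1$, since the discrete maxima still grow like $\sqrt{2d\log R}$ by Lemma \ref{l:max} applied to the lattice. A secondary point is the uniformity of the comparison $K(x,y)\le(1+2\eta)w(R)$ for distances between $R^{1-\beta}$ and $\sqrt d R$; this follows by writing $w(|x-y|)/w(R)$ as $w(u v)/w(v)$ with $v=\sqrt d R$ and $u\ge v^{-\beta}$, up to adjusting $\beta$.
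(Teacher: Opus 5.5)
Your proposal is correct and follows essentially the same route as the paper. You use the interpolation formula of Proposition \ref{p:var} and split at scale $R^{1-\beta}$, bounding the far part by asymptotic flatness and the near part by a bounded-overlap box cover combined with Lemma \ref{l:hyp} and Lemma \ref{l:deloc}, which yields an $O(1/\log R)=o(w(R))$ error since $w(R)\log R\to\infty$. Your extra care with the $\sqrt{d}R$ diameter, and stating the condition correctly as $w(R)\log R\to\infty$ (the paper misprints it as $w(R)/\log R$), are welcome refinements that do not change the argument.
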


\begin{proof}[Proof of Proposition \ref{p:lb}]
Fix $\eta > 0$, and let $\beta > 0$ be as in Definition \ref{d:af}. Let $\mathcal{S}$ be the subset of $(x,y) \in \Lambda_{R,\eps}^2$ such that $\|x-y\|_\infty \le R^{1-\beta}$. Applying \eqref{e:varq1} to the vector $X_{R,\eps}$,
\begin{align*}
 &\textrm{Var}[ Q_1( M_{R,\eps}  )  ]   =  \sum_{x,y \in \Lambda_{R,\eps}} K(x,y) \P[I_{R,\eps}=x] \P[I_{R,\eps}=y] \\
&   \qquad    = \sum_{(x,y) \in \Lambda_{R,\eps}^2 \setminus \mathcal{S} } K(x,y) \P[I_{R,\eps}=x] \P[I_{R,\eps}=y]  + \sum_{(x,y) \in \mathcal{S}} K(x,y) \P[I_{R,\eps}=x] \P[I_{R,\eps}=y] \\
&  \qquad   =: A_{R,\eps} + B_{R,\eps}.
 \end{align*}
By the asymptotic flatness of $w$, for sufficiently large $R$ we can bound 
 \begin{align*}
 A_{R,\eps} & \ge   \min_{(x,y) \in \Lambda_{R,\eps}^2 \setminus \mathcal{S} }  K(x,y) \sum_{(x,y) \in \Lambda_{R,\eps}^2 \setminus \mathcal{S} } \P[I_{R,\eps}=x] \P[I_{R,\eps}=y]  \\
 &  \ge w(R)(1-\eta)  \sum_{(x,y) \in \Lambda_{R,\eps}^2 \setminus \mathcal{S} } \P[I_{R,\eps}=x] \P[I_{R,\eps}=y]  \\
 & \ge  w(R)(1-\eta) \Big( 1 - \sum_{(x,y) \in \mathcal{S}}\P[I_{R,\eps}=x] \P[I_{R,\eps}=y]  \Big)  .
 \end{align*} 
 Moreover we have
 \[ \sum_{(x,y) \in \mathcal{S}}\P[I_{R,\eps}=x] \P[I_{R,\eps}=y]  \le \sup_{x \in \Lambda_{R,\eps}}   \P \big[ I_{R,\eps} \in  x +   \Lambda_{R^{1-\beta}} \big] .  \]
 Since $I_{R,\eps} \to I$ in probability as $\eps \to 0$, we have   
 \begin{equation}
 \label{e:limi}
  \limsup_{\eps \to 0} \sup_{x \in \Lambda_{R,\eps}}   \P \big[ I_{R,\eps}  \in x + \Lambda_{R^{1-\beta}} \big]  \le \sup_{x \in \Lambda_{R}}   \P[  I_R \in x + \Lambda_{2   R^{1-\beta} } ]  . 
  \end{equation}
By Lemma \ref{l:deloc}, the right-hand side of \eqref{e:limi} is at most $R^{-\beta'} $ for some $\beta'>0$ and sufficiently large $R$. Combining we have
 \begin{equation}
 \label{e:a}
 \liminf_{R \to \infty} \liminf_{\eps \to 0}  w(R)^{-1} A_{R,\eps} \ge 1-\eta .
 \end{equation}
 Since $|K(x,y)| \le  \sqrt{ K(x,x) K(y,y) } = 1$, we similarly have
  \begin{align*}
   \limsup_{\eps \to 0} |B_{R,\eps}|   & \le  \limsup_{\eps \to 0} \sum_{(x,y) \in \mathcal{S}}\P[I_{R,\eps}=x] \P[I_{R,\eps}=y]   \le  \sup_{x \in \Lambda_{R,\eps}}   \P[ I_R \in   x + \Lambda_{ 2 R^{1-\beta} }  ]    \le R^{-\beta'}  .
   \end{align*}
   Since $w(R)^{-1} R^{-\beta} \to 0$ by assumption, we conclude that 
    \[ \lim_{R \to \infty} \limsup_{\eps \to 0}  w(R)^{-1} |B_{R,\eps}| = 0 .\] 
The result follows by combining with \eqref{e:a} and taking $\eta \to 0$.
\end{proof}

\begin{proof}[Proof of Proposition \ref{p:ub}]
Fix $\eta , \beta > 0$ and $\mathcal{S}$ as in the previous proof.  Abbreviate $I = I_{R,\eps}$, and define  $f^t$ and $I^t$ as in the statement of Lemma \ref{l:hyp}. Applying Proposition \ref{p:var} to the vector $X_{R,\eps}$,
\begin{align*}
 &\textrm{Var}[  M_{R,\eps}  ]   =  \int_0^1 \sum_{x,y \in \Lambda_{R,\eps}} K(x,y) \P[I =x, I^t=y] dt  \\
&   \qquad    = \int_0^1 \sum_{(x,y) \in \Lambda_{R,\eps}^2 \setminus \mathcal{S} } K(x,y) \P[I=x,I^t =y] \,dt  + \int_0^1 \sum_{(x,y) \in \mathcal{S}} K(x,y) \P[I =x, I^t =y] \, dt  \\
&  \qquad   =: A_{R,\eps} + B_{R,\eps}.
 \end{align*}
 
 By the asymptotic flatness of $w$, for sufficiently large $R$
 \begin{align}
 \label{e:a2}
 \nonumber & w(R)^{-1} A_{R,\eps}   \le  w(R)^{-1} \max_{(x,y) \in \Lambda_{R,\eps}^2 \setminus \mathcal{S} }  \int_0^1  \P[I =x, I^t=y]  \, dt    \\
\nonumber &  \qquad   \le (1 + \eta)  \int_0^1  \sum_{(x,y) \in \Lambda_{R,\eps}^2 } \P[I =x, I^t=y]   \, dt   \\ 
&  \qquad =  1 + \eta .
 \end{align}
 
Let $(\mathcal{C}_i)_{\in \Z^d}$ denote the set of boxes $\Lambda_{R^{1-\beta}} + i R^{1-\beta}$. For each $i \in \Z^d$ let $\widetilde{\mathcal{C}}_i = \mathcal{C}_i \cup \bigcup_{j \sim i}\mathcal{C}_i$, where $j \sim i$ denotes the $3^d - 1$ sites in $\Z^d$ at unit distance to $i$ in the sup-norm. Since $|K(x,y)| \le 1$, and applying the hypercontractivity estimate Lemma \ref{l:hyp},
\begin{align*}
 & B_{R,\eps}   \le  \int_0^1  \sum_i  \P[ I  \in \widetilde{\mathcal{C}}_i , I^t \in \widetilde{\mathcal{C}}_i ] \, dt    \\
&  \qquad   \le   \int_0^1  \sum_i  \P[ I  \in \widetilde{\mathcal{C}}_i ]^{1 + (1-t)/2} \, dt  \\
&   \qquad   \le  \Big( \sum_i  \P[ I  \in \widetilde{\mathcal{C}}_i ] \Big) \int_0^1  \max_i  \P[ I  \in \widetilde{\mathcal{C}}_i ]^{(1-t)/2}  \, dt \\
& \qquad \le 3^d \int_0^1  \max_i  \P[ I  \in \widetilde{\mathcal{C}}_i ]^{(1-t)/2}  ,
 \end{align*}
 where the final inequality is since each $x \in \Lambda_{R,\eps}$ appears in at most $3^d$ distinct $\widetilde{\mathcal{C}}_i$. Since $\int_0^1 \alpha^{(1-t)/2} \, dt = 2/ |\log p|$ for $p \in (0,1)$, we see that
  \[ B_{R,\eps} \le \frac{ 2 \cdot 3^d}{ | \log \max_i  \P[ I  \in \widetilde{\mathcal{C}}_i ] | } .\]
  Since $I := I_{R,\eps} \to I_R$ in probability as $\eps \to 0$, we have 
 \begin{equation}
 \label{e:limi2}
 \limsup_{\eps \to 0}   \max_i  \P[ I  \in \widetilde{\mathcal{C}}_i ]  \le \sup_{x \in \Lambda_R} \P[I_R \in x + \Lambda_{4 R^{1-\beta}} ] . 
 \end{equation}
   By Lemma \ref{l:deloc}, the right-hand side of \eqref{e:limi2} is at most $R^{-\beta'} $ for some $\beta'>0$ and sufficiently large $R$, so that 
 \[ \limsup_{\eps \to 0 } w(R)^{-1} B_{R,\eps}  \le  \frac{ 2 \cdot 3^d}{ \beta' } \frac{1}{ w(R) \log R  }   \]
 for large $R$. Since $w(R) /\log R  \to \infty$ by assumption, the result follows by combining with \eqref{e:a2} and taking $\eta \to 0$.
\end{proof}

\subsection{Completion of the proof}
Theorem \ref{t:main} is an easy consequence of Propositions \ref{p:lb} and \ref{p:ub}. The following elementary claim will be convenient. For a non-degenerate random variable $Y \in L^2$, define its standardisation $\hat{Y} = (Y - \E[Y]) / \sqrt{\textrm{Var}[Y] } $. 

\vspace{0.1cm}
\textbf{Claim:} Let $X,Y \in L^2$ be non-degenerate random variables such that $\textrm{Cov}[X-Y,Y] = 0$. Then 
\begin{equation}
\label{e:varbound}
\E \big[ \big(\hat{X} - \hat{Y} \big)^2 \big] =  2 \Big( 1 - \frac{ \sqrt{\textrm{Var}[Y] }}{\sqrt{\textrm{Var}{X} }} \Big) .
 \end{equation}
 
 \begin{proof}
Use that $\E[\hat{X} \hat{Y} ]    =    \frac{\textrm{Cov}[X,Y]   }{ \sqrt{ \textrm{Var}[X] \textrm{Var}[Y] } }   =  \frac{\textrm{Cov}[X-Y, Y] + \textrm{Var}[Y]   }{ \sqrt{ \textrm{Var}[X] \textrm{Var}[Y] } } = \frac{ \sqrt{\textrm{Var}[Y] }}{\sqrt{\textrm{Var}{X} }}$.
 \end{proof}

\begin{proof}[Proof of Theorem \ref{t:main}]
By construction, $M_{R,\eps} - Q_1( M_{R,\eps} )  $ is orthogonal to $ Q_1( M_{R,\eps} )$, so that in particular  $ \textrm{Var}[ Q_1( M_{R,\eps} ) ]  \le  \textrm{Var}[ M_{R,\eps} ]$. Combining Propositions \ref{p:lb} and \ref{p:ub} we have that
\begin{equation}
\label{e:varasy}
\lim_{R \to \infty}  \limsup_{\eps \to 0} \Big| \frac{  \textrm{Var}[ Q_1( M_{R,\eps} ) ] }{  \textrm{Var}[ M_{R,\eps} ]  } - 1 \Big|  = 0
\end{equation}
and also that 
\begin{equation}
\label{e:varasy2}
\lim_{R \to \infty}  \limsup_{\eps \to 0} \Big| \frac{ \textrm{Var}[ M_{R,\eps} ] }{ w(R) } - 1 \Big| = 0.
\end{equation}
Applying \eqref{e:varbound} to $X = M_{R,\eps}$ and $Y =  Q_1(M_{R,\eps})$, \eqref{e:varasy} gives that
\begin{equation}
\label{e:lim1}
\lim_{R \to \infty} \limsup_{\eps \to 0}  \E \big[ \big(\hat{M}_{R,\eps}  - \hat{Q}_1( M_{R,\eps}  ) \big)^2 \big] = 0 . 
\end{equation}
By Lemma \ref{l:dis},  $\lim_{\eps \to 0} M_{R,\eps} = M_R$ and $\lim_{\eps \to 0} Q_1( M_{R,\eps}  ) = Q_1(M_R) $ in $L^2$, so that \eqref{e:varasy2}--\eqref{e:lim1} yields
\begin{equation}
\label{e:lim2}
\lim_{R \to \infty}  \E \big[ \big(  \hat{M}_R  - \hat{Q}_1( M_{R}  ) \big)^2 \big] = 0 
\end{equation}
and
\begin{equation} 
\label{e:lim3}
 \lim_{R \to \infty}    \frac{ \textrm{Var}[ M_R ] }{ w(R) }  = 1 
 \end{equation}
 Since $\hat{Q}_1( M_R ) \stackrel{d}{=} Z$, a standard Gaussian random variable, \eqref{e:lim2} implies that $\hat{M}_R$ converges in distribution to $Z$. Setting $a_R = \E[ M_R ]$ and $b_R = \sqrt{ \textrm{Var}[M_R]}$, this is the conclusion of Theorem \ref{t:main}. By \eqref{e:lim3} we may replace $b_R$ with $\sqrt{w(R)}$ without change to the conclusion.
 
 This completes the proof under the assumption that $K(x,y) < 1$ for every $x \neq y$. Let us briefly explain how to modify the proof if this assumption fails. Let $\kappa(x) > 0$ be a function to be determined later. Let $g$ be a smooth centred stationary Gaussian field on $\mathbb{R}^d$ satisfying $E[g(0)^2] = 1$ and $\E[g(0) g(x)] = 0$ for $|x| > 1$ (it is easy to see such a field exists). Since $g$ necessarily satisfies $\textrm{Corr}(g(x),g(y)) < 1$, we may apply the previous arguments to the field 
 \[ f'(x) := \frac{ f(x) +  \kappa(x) g(x) }{ 1 + \kappa^2(x)} \]
  to conclude that, letting $M'_R$ be the analogue of $M_R$ for $f'$,
  \[ w(R)^{-1/2} (M'_R - a_R) \stackrel{d}{\Longrightarrow} Z.\]
Thus it suffices to show that
 \[ |M_R - M_R'|  \le o_\mathbb{P}(w(R)^{1/2}) . \]
 Letting $I'_R$ be the analogue of $I_R$ for $f'$, by Lemma \ref{l:deloc} we have $\mathbb{P}(I'_R \in \Lambda_R \setminus \Lambda_{\sqrt{R}}) \to 1$. Since also
  \begin{align*}
 w(R)^{-1/2} |M_R - M_R'| \id_{I'_R \in \Lambda_R \setminus \Lambda_{\sqrt{R}}} & \le w(R)^{-1/2} \sup_{x \in   \Lambda_R \setminus \Lambda_{\sqrt{R}}} \Big| f(x) - \frac{f(x) + \kappa(x) g(x)}{1+ \kappa^2(x)}  \Big| \\
  & \le  w(R)^{-1/2}  \sup_{x \in   \Lambda_R \setminus \Lambda_{\sqrt{R}}} \kappa^2(x) |f(x) | + \kappa(x) |g(x)| ,
  \end{align*}
in turn it suffices that 
\[ w(R)^{-1/2} \sup_{x \in   \Lambda_R \setminus \Lambda_{\sqrt{R}}} \kappa^2(x) |f(x) | + \kappa(x) |g(x)|   \stackrel{\P}{\to} 0 . \]
This can be guaranteed by choosing $\kappa(x)$ to decay sufficiently rapidly.
\end{proof}

\medskip
\bibliographystyle{halpha-abbrv}
\bibliography{max}

@article{ber64,
	title = {Limit theorems for the maximum term in stationary sequences},
	author = {S. M. Berman},
	journal = {Ann. Math. Stat.},
	volume = {35},
	number = {2},
	year = {1964},
	pages = {502--516},
	}

@Book{cha14,
 Author = {Chatterjee, S.},
 Title = {Superconcentration and related topics},
 Journal = {Springer Monographs in Mathematics},
 Year = {2014},
 Publisher = {Springer},
 Language = {English},
}

@article{ft28,
	author={R.A. Fishcer and L.H. Tippett},
	title={Limiting forms of the frequency distribution of the largest member of a sample},
	journal={Proc. Cambridge Phil. Sot.},
	volume={24},
	year={1928},
	pages={180--190},
	}

@article{gne43,
	author={B.V. Gnedenko},
	title={Sur la distribution limite du terme maximum d’une s\'{e}rie al\'{e}atoire},
	journal={Ann. Math.},
	volume={44},
	year={1943},
	pages={423--453},
	}

@article{hm99,
	author={H.-C. Ho and W.P. McCormick},
	title={Asymptotic distribution of sum and maximum for {G}aussian processes},
	journal={J. Appl. Prob.},
	volume={36},
	pages={1031--1044},
	year={1999}
}

@Book{jan97,
 Author = {Janson, S.},
 Title = {{Gaussian Hilbert spaces}},
 Journal = {{Camb. Tracts Math.}},
 Volume = {129},
 Year = {1997},
 Publisher = {Cambridge: Cambridge University Press},
}

@article{kp90,
	author={J. Kim and D. Pollard},
	title={Cube root asymptotics},
	journal={Ann. Stat.},
	volume={18},
	number={1},
	year={90},
	pages={191--219},
}

@article{my75,
	title = {Limit distributions for the maxima of stationary {G}aussian processes},
	author = {Y. Mittal and D. Ylvisaker},
	journal = {Stoc. Proc. Appl.},
	volume = {3},
	pages = {1--18},
	year = {1975},
	}

@article{my76,
	title = {Strong laws for the maxima of stationary {G}aussian processes},
	author = {Y. Mittal and D. Ylvisaker},
	journal = {Ann. Probab.},
	volume = {4},
	pages = {357--371},
	year = {1976},
	}

@article{pic67,
	title={Maxima of stationary {G}aussian processes},
	author={J. Picklands},
	journal={Probab. Theory. Related Fields},
	year={1967},
	volume={7},
	pages={190--223},
	}

\end{document}